\documentclass[10pt]{amsart}

\usepackage{amsfonts}
\usepackage{amssymb}
\usepackage{amsmath, amsthm, latexsym}
\usepackage[all]{xy}
\usepackage{hyperref}
\usepackage{graphicx}
\usepackage[margin=1.5in]{geometry}
\usepackage{color}

\def \C{\mathbb{C}}
\def \Z{\mathbb{Z}}
\def \R{\mathbb{R}}

\def \Q{\mathbb{Q}}
\def \GL{\textup{GL}}
\def \SL{\textup{SL}}
\def \K{\mathcal{K}}
\def \Log{\operatorname{Log}}
\def \val{\operatorname{val}}
\def \trop{\operatorname{trop}}

\def \Mat{\operatorname{Mat}}
\def \GL{\operatorname{GL}}
\def \diag{\operatorname{diag}}

\theoremstyle{plain}
\newtheorem{Th}{Theorem}[section]
\newtheorem{Lem}[Th]{Lemma}

\newtheorem{Conj}[Th]{Conjecture}

\theoremstyle{definition}

\newtheorem{Def}[Th]{Definition}

\pagestyle{plain}
\begin{document}

\title{Invariant factors as limit of singular values of a matrix}
\author{Kiumars Kaveh}
\address{Department of Mathematics, University of Pittsburgh,
Pittsburgh, PA, USA.}
\email{kaveh@pitt.edu}

\author{Peter Makhnatch}
\address{Department of Mathematics, University of Pittsburgh,
Pittsburgh, PA, USA.}
\email{pem43@pitt.edu}

\begin{abstract}
The paper concerns a result in linear algebra motivated by ideas from tropical geometry. Let $A(t)$ be an $n \times n$ matrix whose entries are Laurent series in $t$. We show that, as $t \to 0$, the logarithms of singular values of $A(t)$ approach the invariant factors of $A(t)$. This leads us to suggest logarithms of singular values of an $n \times n$ complex matrix as an analogue of the logarithm map on $(\C^*)^n$ for the matrix group $\GL(n, \C)$. 
\end{abstract}

\thanks{The first author was partially supported by a National Science Foundation Grant (Grant ID: DMS-1601303).}

\keywords{Singular value decomposition, Smith normal form, tropical variety, amoeba, matrix group, spherical variety} 
\subjclass[2010]{15A18, 15A21, 20H20, 14T05}
\date{\today}
\maketitle

\section{Introduction}
This paper is intended for general mathematics audience and hopefully for the most part is accessible to interested mathematics students.
 
Let $A \in \Mat(n, \C)$ be an $n \times n$ complex matrix. Recall that the singular values of $A$ are square roots of the eigenvalues of the Hermitian matrix $AA^*$, where $A^* = \bar{A}^T$ is the conjugate transpose of $A$ (note that the eigenvalues of $AA^*$ are non-negative real numbers). The \emph{singular value decomposition} (for square matrices) states that there are $n \times n$ unitary matrices $U$, $W$ such that $A = UDW$, where $D$ is a diagonal matrix with singular values of $A$ as its diagonal entries. 

There is a non-Archimedean analogue of the singular value 
decomposition usually referred to as the \emph{Smith normal form} theorem. Here one replaces $\C$ with the field of fractions of a principal ideal domain (see Theorem \ref{th-Smith-normal-form}). We will be interested in the case of the field of formal Laurent series. Let $\mathcal{O} = \C[[t]] = \{ \sum_{i=0}^\infty c_i t^i \mid c_i \in \C \}$ be the algebra of formal power series in one variable $t$ and let $\mathcal{K} = \C((t)) = \{ \sum_{i=-k}^\infty c_i t^i \mid k \in \Z,~c_i \in \C\}$ be its field of fractions which is the field of formal Laurent series in $t$. Here the term \emph{formal} means that we do not require the series to be convergent and $\C[[t]]$ (respectively $\C((t))$) consists of all, possibly non-convergent, power series (respectively Laurent series).
Let $A(t)=(A_{ij}(t)) \in \Mat(n, \mathcal{K})$. 
The Smith normal form theorem in this case states that there are matrices $P(t), Q(t) \in \GL(n, \mathcal{O})$ and a diagonal matrix $\tau(t) \in \Mat(n, \mathcal{K})$ such that $A(t) = P(t) \tau(t) Q(t)$. The diagonal entries of $\tau(t)$ are usually called the \emph{invariant factors of $A(t)$}. They are unique up to multiplication by units in $\mathcal{O}$.
We recall that any power series with non-zero constant term is a unit in $\mathcal{O}$, and hence any Laurent series can be written as a power of $t$ times a unit in $\mathcal{O}$. Thus, we can moreover take the diagonal entries of $\tau(t)$ to be of the form $t^{v_1}, \ldots, t^{v_n}$, where $v_i \in \Z$. With slight abuse of terminology we refer to the $v_i$ also as the \emph{invariant factors of $A(t)$}. 

Let us assume that $A(t)$ is convergent for a punctured neighborhood of the origin, that is, there exists $\epsilon > 0$ such that all the Laurent series $A_{ij}(t)$ are convergent for any $0 < |t| < \epsilon$. In this paper we prove the following statement:

\begin{Th} \label{th-main}
For sufficiently small $t \neq 0$, let $d_1(t) \leq \cdots \leq d_n(t)$ denote the singular values of $A(t)$ ordered increasingly. Also let $v_1 \geq \cdots \geq v_n$ be the invariant factors of $A(t)$ ordered decreasingly. We then have:
$$\lim_{t \to 0^+} (\log_t(d_1(t)), \ldots, \log_t(d_n(t))) = (v_1, \ldots, v_n).$$
\end{Th}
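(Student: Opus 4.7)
\bigskip

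\noindent\textbf{Proof proposal.}

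The plan is to leverage the Smith normal form directly: write $A(t) = P(t)\tau(t)Q(t)$ with $P(t), Q(t) \in \GL(n, \mathcal{O})$ and $\tau(t) = \operatorname{diag}(t^{v_1}, \ldots, t^{v_n})$. The singular values of $\tau(t)$ are, for $t \in (0,1)$, precisely the numbers $t^{v_1} \leq t^{v_2} \leq \cdots \leq t^{v_n}$ in increasing order. I will show that the conjugating factors $P(t), Q(t)$ perturb these only by a bounded multiplicative constant, and this constant gets absorbed by $\log_t$ as $t \to 0^+$.

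The main technical input is the following standard consequence of the min-max characterization of singular values: if $X, Y$ are invertible and $B$ is arbitrary, then for every $k$,
$$\frac{\sigma_k(B)}{\|X^{-1}\|\,\|Y^{-1}\|} \;\leq\; \sigma_k(XBY) \;\leq\; \|X\|\,\|Y\|\,\sigma_k(B),$$
where $\|\cdot\|$ is the spectral norm. This follows by writing $\sigma_k(A) = \max_{\dim V = k} \min_{x \in V, \, \|x\|=1} \|Ax\|$ and applying the pointwise inequalities $\|XBYx\| \leq \|X\|\|BYx\|$ and $\|BYx\| \geq \|Bx'\|/\|Y^{-1}\|$ (after a change of variables $x' = Yx$), together with their analogues on the other side.

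The key observation is that $P(t), Q(t) \in \GL(n, \mathcal{O})$ means $\det P(t), \det Q(t)$ are units in $\mathcal{O}$, so their constant terms are nonzero; hence $P(0), Q(0) \in \GL(n, \C)$. Combined with the fact that one may choose $P, Q$ convergent near $0$ (the standard SNF algorithm over $\mathcal{O}$ uses only elementary row/column operations whose entries are polynomial expressions in entries of $A(t)$, so convergence is inherited from that of $A(t)$; alternatively, truncate $P, Q$ to sufficiently high order and absorb the error), continuity in $t$ yields a constant $M > 1$ and $\delta > 0$ with $\|P(t)^{\pm 1}\|, \|Q(t)^{\pm 1}\| \leq M$ for all $0 < t < \delta$. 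Applying the sandwich inequality above to $A(t) = P(t)\tau(t)Q(t)$ and matching the orderings ($d_i(A(t)) = \sigma_{n-i+1}(A(t))$ and $\sigma_{n-i+1}(\tau(t)) = t^{v_i}$) gives
$$\frac{t^{v_i}}{M^2} \;\leq\; d_i(t) \;\leq\; M^2\, t^{v_i}, \qquad 0 < t < \delta.$$

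To finish, take $\log_t$. Since $\log_t$ is strictly decreasing on $(0,1)$, this becomes
$$v_i + \log_t(M^2) \;\leq\; \log_t d_i(t) \;\leq\; v_i - \log_t(M^2).$$
Because $\log_t(M^2) = \log(M^2)/\log(t) \to 0$ as $t \to 0^+$, the squeeze gives $\log_t d_i(t) \to v_i$, as desired. The step I expect to require the most care is justifying that $P(t), Q(t)$ can be taken convergent (rather than merely formal) near the origin, so that one may honestly speak of $P(0), Q(0)$ and use continuity to obtain the uniform bound $M$; this is essentially a bookkeeping issue in the SNF algorithm, but it is what lets the otherwise formal identity $A(t) = P(t)\tau(t)Q(t)$ interact with the analytic notion of singular value.
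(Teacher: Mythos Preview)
Your proof is correct and follows essentially the same route as the paper: Smith normal form, uniform bounds on $P(t)^{\pm1}$ and $Q(t)^{\pm1}$ near $t=0$ (the paper isolates exactly this as a lemma), and a min-max argument to squeeze $d_k(t)$ between constant multiples of $t^{v_k}$ before taking $\log_t$. Your packaging via the spectral-norm sandwich $\sigma_k(B)/(\|X^{-1}\|\,\|Y^{-1}\|)\le\sigma_k(XBY)\le \|X\|\,\|Y\|\,\sigma_k(B)$ is a bit more streamlined than the paper's explicit choice of test subspaces in the Courant--Fischer formula, and you rightly flag the convergence of $P(t),Q(t)$ as the point needing care---something the paper's lemma leaves implicit.
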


We remark that in the usual statement of the Smith normal form (Theorem \ref{th-Smith-normal-form}) the invariant factors are (often) ordered increasingly, but in the above we are considering them ordered decreasingly.  

The motivation to consider this statement comes from tropical geometry and an attempt to generalize the notions of (Archimedean) amoeba and tropical variety (non-Archimedean amoeba) for subvarieties in $(\C^*)^n$, to subvarieties in $\GL(n, \C)$ (see Sections \ref{sec-tropical} and \ref{sec-amoeba}). This is related to tropical geometry on {\it spherical varieties} and {\it spherical tropicalization}. In Section \ref{sec-sph-trop} we give some background on spherical tropicalization. We give a proof of Theorem \ref{th-main} in Section \ref{sec-main}. Before that we briefly review motivating material from tropical geometry and we also recall some background material from linear algebra. 

In Section \ref{sec-amoeba} we state the conjecture that an analogue of Theorem \ref{th-Jonsson} (for the classical case of amoebae and tropical varieties) holds for subvarieties of $\GL(n, \C)$  (Conjecture \ref{conj-sph-amoeba}). That is, if $Y \subset \GL(n, \C)$ is a subvariety, then the set of logarithms of singular values of matrices lying on $Y(\overline{\K})$ approaches, in the sense of  Kuratowski, to the (topological) closure of the set of invariant factors of matrices in $Y(\overline{\K})$.

We point out that Theorem \ref{th-main} (without proof) and Conjecture \ref{conj-sph-amoeba} are also stated in \cite[Section 7, Example 7.7 and Proposition 7.8]{KM}.

\subsection*{Acknowledgement} This paper is the outcome of the summer undergraduate research project of Peter Makhnatch at the University of Pittsburgh (2018). We are thankful to the anonymous referees for  very careful reading of the paper and valuable suggestions that greatly improved the content and presentation of the paper.

\section{Tropicalization and non-Archimedean amoebae}   \label{sec-tropical}

From the point of view of algebraic geometry, tropical geometry is concerned with describing the ``(exponential) asymptotic behavior'', or ``(exponential) behavior at infinity'', of subvarieties in $(\C^*)^n$ where $\C^* = \C \setminus \{0\}$. With componentwise multiplication, $(\C^*)^n$ is an abelian group. It is usually referred to as an \emph{algebraic torus} and is one of the basic examples of algebraic groups. A subvariety of $(\C^*)^n$ is called 
a {\it very affine variety}. The behavior at infinity of a subvariety $Y \subset (\C^*)^n$ is encoded in a union of convex polyhedral cones called the {\it tropical variety} of $Y$, as we explain below. One of the early sources of the notion of tropical variety is the 1971 paper by George Bergman on the logarithmic limit-set of an algebraic variety (\cite{Bergman}). 


Let $\K = \C((t))$ be the field of formal Laurent series in one indeterminate $t$. 
Then $\overline{\K} = \C\{\{t\}\} =  \bigcup_{k=1}^\infty \C((t^{1/k}))$ is the field of formal Puiseux series. It is the algebraic closure of the field of Laurent series $\K$. This fact is the content of the Newton-Puiseux theorem (for a nice account of this see \cite[Lecture 12]{Abhyankar}). 
The field $\overline{\K}$ comes equipped with the {\it order of vanishing} valuation 
$\val: (\overline{\K})^* = \overline{\K} \setminus \{0\} \to \Q$ defined as follows: for a Puiseux series $f(t) = \sum_{i=m}^\infty a_i t^{i/k}$, where $a_m \neq 0$, we put $\val(f) = m/k$.
The valuation $\val$ gives rise to the {\it tropicalization map} $\trop$ from $(\overline{\K}^*)^n$ to $\Q^n$:
$$\trop(f_1, \ldots, f_n) = (\val(f_1), \ldots, \val(f_n)).$$
Let $Y \subset (\C^*)^n$ be a subvariety with ideal $I=I(Y) \subset \C[x_1^\pm, \ldots, x_n^\pm]$. Let $Y(\overline{\K})$ denote the Puiseux series valued points on $Y$, that is, $Y(\overline{\K}) = \{ f=(f_1, \ldots, f_n) \in (\overline{\K}^*)^n \mid p(f_1, \ldots, f_n) = 0,~\forall p \in I\}$. The {\it tropical variety of $Y$}, denoted $\trop(Y)$, is defined to be the closure (in $\R^n$) of the image of $Y(\overline{\K})$ under the map $\trop$. 
One shows that the tropical variety of a subvariety always has the structure of a \emph{fan} in $\R^n$, that is, it is a finite union of (strictly) convex rational polyhedral cones (we refer the interested reader to \cite[Chapter 3]{Maclagan-Sturmfels}).
We recall that a \emph{convex rational polyhedral cone} in $\R^n$ is the convex cone (with apex at the origin) generated by a finite number of vectors in $\Q^n$. It is called \emph{strictly convex} if it is a pointed cone, i.e. does not contain any lines through the origin. Finally, a \emph{fan} is a finite collection of strictly convex rational polyhedral cones that intersect on their common faces (see for example \cite[Section 2.3]{Maclagan-Sturmfels}). 

The set  $\trop(Y)$ describes all the exponential directions along which $Y$ approaches infinity. One can make this precise using the theory of \emph{toric varieties}. For people familiar with this theory we state the following (see \cite[Section 6.4]{Maclagan-Sturmfels} and the references therein): 

\begin{Th}
Let $\Sigma$ be a fan in $\R^n$. Then the closure $\overline{Y}$ of $Y$ in the toric variety $X_\Sigma \supset (\C^*)^n$ is a complete variety (equivalently, is compact in the usual Euclidean topology) if and only if the support of $\Sigma$ contains $\trop(Y)$, that is, the union of the cones in $\Sigma$ contains $\trop(Y)$. 
\end{Th}

The above theorem is usually referred to as the tropical compactification theorem. It appears in \cite{Tevelev} (and is also implicit in \cite[\S 3]{DP2}).  

Since intersection theoretic data does not change under deformations, tropical geometry can often be used to give combinatorial or piecewise linear formulae for intersection theoretic problems. Many developments and applications of tropical geometry, at least in algebraic geometry, come from this point of view.



\section{Spherical tropicalization}  \label{sec-sph-trop}

In \cite{Vogiannou}, Tevelev and Vogiannou far extend the notion of tropicalization map for the algebraic torus $(\C^*)^n$ to so-called \emph{spherical homogeneous spaces} (from the theory of algebraic groups). In this section, we give a very brief introduction to spherical varieties and their tropicalization map.  

For the following paragraph, we assume some familiarity with the theory of algebraic groups. At the end of this section, we give a description of the spherical tropicalization map for the case of the general linear group $\GL(n, \C)$ without any necessary knowledge of algebraic groups. 

One starts with a reductive algebraic group $G$ over $\C$. Common examples are $G = \GL(n, \C)$, $\SL(n, \C)$ or $\operatorname{SO}(n, \C)$. Let $H$ be a closed algebraic subgroup of $G$. The homogeneous space $G/H$ is called a \emph{spherical homogeneous space} if the action of a Borel subgroup on $G/H$ (by left multiplication) has a dense orbit. Spherical homogeneous spaces are generalizations of algebraic tori and, extending the theory of toric varieties, their compactifications can be described by means of combinatorial gadgets such as convex cones and convex polytopes. 

Let $(G/H)(\overline{\K})$ denote the Puiseux series valued points of $G/H$. One extends the tropicalization map from the torus case and constructs a \emph{spherical tropicalization map} $\operatorname{strop}: (G/H)(\overline{\K}) \to \mathcal{V}_{G/H}$. Here, $\mathcal{V}_{G/H}$ is a certain polyhedral cone associated to $G/H$ called the \emph{valuation cone} of $G/H$. Every point in the valuation cone $\mathcal{V}_{G/H}$ represents a discrete valuation (with values in $\R$) on the field of rational functions $\C(G/H)$ that is invariant under the left action of $G$ (see \cite[Section 4]{Vogiannou}, \cite[Section 5.1]{KM} and \cite{Nash}). 


Now consider the group $H = \GL(n, \C)$. The product group $G = \GL(n, \C) \times \GL(n, \C)$ acts on $\GL(n, \C)$ by  multiplication from left and right. That is, for $g_1, g_2, h \in \GL(n, \C)$:
$$(g_1, g_2) \cdot h := g_1 h  g_2^{-1}.$$
This action is obviously transitive. The stabilizer of the identity element is $H_{\diag} = \{(h, h) \mid h \in \GL(n, \C)\}$. Thus, $H = \GL(n, \C)$ can be identified with $(H \times H) / H_{\diag}$. One verifies that $H$ is a $(H \times H)$-spherical homogeneous space.

Following \cite[Theorem 2]{Vogiannou}, the tropicalization map in this case sends a matrix $A(t) \in \GL(n, \K)$ to its invariant factors,  ordered decreasingly (see the paragraph after Theorem \ref{th-Smith-normal-form}). We point out that, in general, it suffices to know the tropicalization map for $\K$-valued points only (see \cite[Lemma 14]{Vogiannou}). 

\section{Logarithm map and Archimedean amoebae}
\label{sec-amoeba}

For $t>0$, the {\it logarithm map} $\Log_t: (\C^*)^n \to \R^n$ is defined by:
\begin{equation} \label{equ-log-map-torus}
\Log_t(z_1, \ldots, z_n) = (\log_t|z_1|, \ldots, \log_t|z_n|).
\end{equation}
Clearly the inverse image of every point is an $(S^1)^n$-orbit in $(\C^*)^n$. Here $S^1$ denotes the unit circle and $(S^1)^n = \{(z_1, \ldots, z_n) \mid |z_1| = \cdots = |z_n| = 1\}$. It can be shown that $(S^1)^n$ is the largest compact subgroup of $(\C^*)^n$ (i.e. it is a \emph{maximal compact subgroup} of $(\C^*)^n$).
\begin{Def}
For a subvariety $Y \subset (\C^*)^n$, its \emph{(Archimedean) amoeba} $\mathcal{A}_t(Y)$ is the image of $Y$ in $\R^n$ under the logarithm map $\Log_t$.
\end{Def} 
Amoebas were introduced by Gelfand, Kapranov and Zelevinsky in \cite[Section 6.1]{GKZ}.
Figure \ref{fig-amoeba-line} shows an amoeba $\mathcal{A}_t(Y)$ of the line $Y$ in $(\C^*)^2$ given by the equation $x+y+1=0$, for some $0 < t < 1$. Note that this amoeba stretches to infinity along three directions (tentacles).

The amoeba $\mathcal{A}_t(Y)$ is a closed and unbounded subset of $\R^n$. 
An amoeba goes to infinity along certain directions usually called its {\it tentacles} (and hence the name amoeba). See \cite[Section 6.1]{GKZ}, \cite{Mikhalkin} and \cite[Section 1.4]{Maclagan-Sturmfels} for an introduction to the notion of amoeba.
\begin{figure}   
\includegraphics[width=7cm]{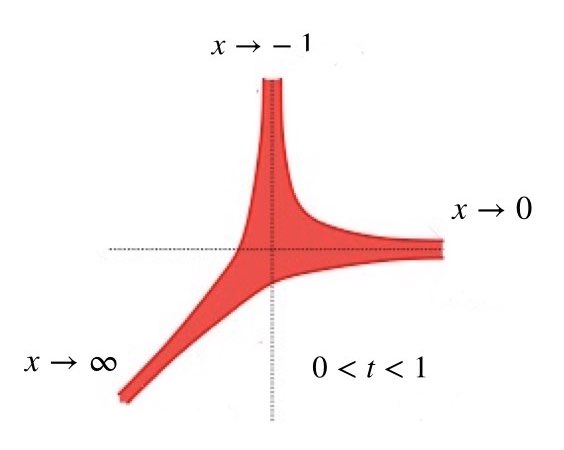} 
\caption{An amoeba of the line $x+y+1=0$ in $(\C^*)^2$.} 
\label{fig-amoeba-line} 
\end{figure} 
The directions along which an amoeba goes to infinity in fact coincides with the tropical variety of $Y$ and is one of the main motivations for introducing the notion of amoeba. This can be stated as follows:
\begin{center}
\emph{As $t \to 0$, the amoeba $\mathcal{A}_t(Y)$ approaches the tropical variety $\trop(Y)$.} 
\end{center}
When $Y$ is a hypersurface this is relatively easy to show and basically appears in \cite[Section 6.1, Proposition 1.9]{GKZ}. Even though the statement that, for arbitrary $Y$, the amoeba approaches the tropical variety has been known as a folklore, a precise formulation and proof only appeared relatively recently in (\cite[Theorem A]{Jonsson}). 

To make the statement precise we need to recall the notion of \emph{Kuratowski convergence} of subsets. It is an analogue of the Hausdorff convergence of compact subsets for not-necessarily-compact subsets (see \cite[Section 2]{Jonsson}). Let $B \subset \R$ and let $A_t$ be a family of subsets in $\R^n$ parameterized by points $t \in B$. Take $t_0 \in B$. One says that, as $t \to t_0$, the $A_t$ approach a subset $A \subset \R^n$ if the following holds:
\begin{itemize}
\item[(a)] For any $x \notin A$ there exists an open neighborhood $U \subset \R^n$ of $x$ and an open neighborhood $B_0 \subset B$ of $t_0$ such that $A_t \cap U = \emptyset$ for all $t \in B_0$.
\item[(b)] For any $x \in A$ and any open neighborhood $U \subset \R^n$ of $x$, there is an open neighborhood $B_0 \subset B$ of $t_0$ such that $A_t \cap U \neq \emptyset$, for all $t \in B_0$.
\end{itemize}
Roughly speaking, if, as $t \to t_0$, the $A_t$ approach $A$ in the sense of Kuratowski, then any point of $A$ is approached by points in the $A_t$ and no point outside $A$ is approached by points in the $A_t$.

\begin{Th}  \label{th-Jonsson}
As $t \to 0$ the amoeba $\mathcal{A}_t(Y)$ approaches the tropical variety $\trop(Y)$ (in the sense of Kuratowski convergence of subsets in $\R^n$). 
\end{Th}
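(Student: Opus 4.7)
My plan is to establish the stated Kuratowski convergence by separately verifying the two defining inclusions
$$\limsup_{t \to 0^+} \mathcal{A}_t(Y) \subseteq \trop(Y), \qquad \trop(Y) \subseteq \liminf_{t \to 0^+} \mathcal{A}_t(Y).$$
Since the lower limit is always contained in the upper limit and $\trop(Y)$ is a closed rational polyhedral complex, these two inclusions together yield convergence in the Kuratowski sense. Throughout, let $I = I(Y) \subset \C[x_1^{\pm 1}, \ldots, x_n^{\pm 1}]$ be the defining ideal of $Y$.

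For the upper inclusion, suppose $t_k \to 0^+$ and $w_k = \Log_{t_k}(z_k) \to w$ with $z_k \in Y$. I will show that for every $f = \sum_\alpha c_\alpha x^\alpha \in I$ the point $w$ lies in the tropical hypersurface $\trop(V(f))$; by the fundamental theorem of tropical geometry, the intersection of all such hypersurfaces is exactly $\trop(Y)$, so this suffices. From $|z_{k,i}| = t_k^{w_{k,i}}$ one obtains $|c_\alpha z_k^\alpha| = |c_\alpha|\, t_k^{\langle w_k, \alpha\rangle}$, whence $\log_{t_k}|c_\alpha z_k^\alpha| = \langle w_k, \alpha\rangle + \log_{t_k}|c_\alpha| \to \langle w, \alpha\rangle$, because $\log_{t_k}|c_\alpha| = (\log|c_\alpha|)/\log t_k \to 0$. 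If the minimum of $\langle w, \alpha\rangle$ over the support of $f$ were achieved uniquely at some $\alpha^*$, then for large $k$ the term $c_{\alpha^*} z_k^{\alpha^*}$ would strictly dominate the sum of all other terms in absolute value, contradicting $f(z_k) = 0$; hence the minimum is attained at least twice, which is exactly the condition $w \in \trop(V(f))$.

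For the lower inclusion, fix $w \in \trop(Y)$ and $\varepsilon > 0$; I must produce $z_t \in Y$ with $\|\Log_t(z_t) - w\| < \varepsilon$ for all sufficiently small $t > 0$. By the rationality of the polyhedral complex $\trop(Y)$ I may first perturb $w$ to a nearby rational point, and so assume $w$ lies in the image of $\trop : Y(\overline{\K}) \cap (\overline{\K}^*)^n \to \Q^n$. Choose a lift $y(t) = (y_1(t), \ldots, y_n(t))$ with $\ord(y_i) = w_i$. The key non-trivial step is to arrange that the Puiseux series $y_i(t)$ be convergent on some punctured disk $0 < |t| < \delta$, so that $y(t)$ defines an honest analytic arc in $Y$. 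Granting this, write $y_i(t) = c_i t^{w_i}(1 + r_i(t))$ with $c_i \in \C^*$ and $r_i(t) \to 0$; then
$$\log_t|y_i(t)| = w_i + \log_t|c_i| + \log_t|1 + r_i(t)| \longrightarrow w_i \quad \text{as } t \to 0^+,$$
giving the desired approximating sequence, since $y(t) \in Y$ for every small $t$ because $Y$ is $\C$-defined.

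The main obstacle is precisely this convergent realization step: the definition of $\trop(Y)$ supplies only formal Puiseux lifts, whereas the amoeba approximation requires honest analytic ones. I would handle it by invoking Kapranov's theorem in the hypersurface case, where a convergent Puiseux branch is built by a classical Newton-polygon construction, and by reducing the higher-codimension case (via Noether normalization or intersection with a generic family of hyperplanes) to a curve inside $Y$, whose Puiseux expansion is convergent by the classical Puiseux theorem for analytic plane curves.
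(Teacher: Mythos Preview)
The paper does not supply its own proof of this statement: it is quoted as well-known with a reference to \cite{Jonsson}, and the only argument the paper actually carries out (in the final section) is for the singular-values/invariant-factors theorem. There is therefore no in-paper proof to compare your proposal against.

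Assessing your outline on its own: the $\limsup$ inclusion is correct and essentially complete. For the $\liminf$ inclusion you have correctly isolated the heart of the matter --- replacing a formal Puiseux lift of a rational $w$ by a \emph{convergent} one --- but the resolution you sketch has a genuine gap. Cutting $Y$ down to a curve by \emph{generic} hyperplanes (or a Noether normalization) will not in general keep the prescribed point $w$ in the tropicalization of the resulting curve; tropicalization does not commute with generic linear sections in that way, so the classical analytic Puiseux theorem cannot be invoked until you first explain how to choose the curve $C \subset Y$ with $w \in \trop(C)$. Filling this in requires a more deliberate argument (a tropical transversality statement, an Artin-type approximation of formal arcs by convergent ones, or the Berkovich-space framework used in \cite{Jonsson}), and that is exactly where the substance of the theorem lies.
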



In this paper, we suggest the logarithm of singular values of a matrix in $\GL(n, \C)$ as an analogue of the logarithm map on the torus $(\C^*)^n$ for $\GL(n, \C)$. 
The main theorem (Theorem \ref{th-main}) states that when $A(t) \in \GL(n, \K)$ is a matrix (over the formal Laurent series field) such that $A(t)$ is convergent for sufficiently small $t \neq 0$, then, as $t \to 0$, the logarithm of the set of  singular values of $A(t)$ approaches the set of invariant factors of $A$. This is an analogue of Theorem \cite[Theorem B]{Jonsson} for when the variety is a single point.

Let $Y \subset \GL(n, \C)$ be a subvariety. Let $Y(\overline{\K})$ be the Puiseux series valued points of $Y$. This means that each point in $Y(\overline{\K})$ is an invertible matrix whose entries are Puiseux series and they satisfy the defining polynomial equations of $Y$. We define the \emph{spherical amoeba} of $Y$ (to the base $t$) to be the following set: 
$$\{ (\log_t(d_1), \ldots, \log_t(d_n)) \mid d_1 \leq \cdots \leq d_n \text{ are singular values of some matrix } A \in Y\}.$$
Following \cite[Theorem 2]{Vogiannou}, we also define the \emph{spherical tropical variety} of $Y$ to be the (topological) closure of the set of invariant factors (ordered decreasingly) of all matrices $A(t) \in Y(\overline{\K})$, i.e., the entries of $A(t)$ are Puiseux series that satisfy the defining polynomial equations of $Y$. We make the following conjecture (see also \cite[Section 7]{KM}). 
\begin{Conj}  \label{conj-sph-amoeba}
As $t \to 0$, the spherical amoeba of $Y$ approaches the spherical tropical variety of $Y$ (in the sense of Kuratowski).  
\end{Conj}


\section{Some background material from linear algebra}
In this section we review some well-known statements from linear algebra that appear in the statement or proof of our main theorem (Theorem \ref{th-main}). The first one is the \emph{singular value decomposition} (for a proof see for example \cite[Chapter 8, p. 170]{Lax}). It has many applications in mathematics, engineering and statistics.  

\begin{Th}[Singular value decomposition]  \label{th-SVD}
Let $A \in \Mat(n, \C)$. Then $A$ can be written as: $$A = UDW,$$ where $U$, $W$ are $n \times n$ unitary matrices and $D$ is diagonal with nonnegative real entries. Moreover, the diagonal entries of $D$ are the eigenvalues of the positive semi-definite matrix $\sqrt{AA^*}$ where $A^* = \bar{A}^t$. The diagonal entries of $D$ are called the {\it singular values of $A$}.
\end{Th}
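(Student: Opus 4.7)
The plan is to derive the singular value decomposition from the spectral theorem applied to the Hermitian positive semi-definite matrix $AA^*$. Since $AA^*$ is self-adjoint with respect to the standard Hermitian inner product on $\C^n$, the spectral theorem produces a unitary $U$ and a diagonal $\Lambda = \mathrm{diag}(\lambda_1, \ldots, \lambda_n)$ with $U^*(AA^*)U = \Lambda$. Positive semi-definiteness of $AA^*$ (which follows from $\langle AA^*v,v\rangle = \|A^*v\|^2 \geq 0$) forces each $\lambda_i \geq 0$, so I set $\sigma_i = \sqrt{\lambda_i}$ and $D = \mathrm{diag}(\sigma_1, \ldots, \sigma_n)$. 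Since $\sqrt{AA^*}$ is defined (via the same spectral decomposition) by taking the nonnegative square root on each eigenspace, its eigenvalues are exactly the $\sigma_i$, which immediately identifies the diagonal entries of $D$ as required by the second sentence of the theorem.

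Next I would construct the second unitary $W$. Let $u_1, \ldots, u_n$ denote the columns of $U$, reordered so that $\sigma_1, \ldots, \sigma_r$ are the nonzero singular values. For $i \leq r$ define $w_i = A^* u_i / \sigma_i$. A direct computation gives $\langle w_i, w_j \rangle = (\sigma_i \sigma_j)^{-1} \langle A^* u_i, A^* u_j \rangle = (\sigma_i \sigma_j)^{-1} u_j^* (AA^*) u_i = \delta_{ij}$, so $\{w_1, \ldots, w_r\}$ is orthonormal. Each such $w_i$ lies in $\mathrm{image}(A^*) = \ker(A)^\perp$, so I would complete the list to an orthonormal basis $w_1, \ldots, w_n$ of $\C^n$ by choosing $w_{r+1}, \ldots, w_n$ (via Gram--Schmidt) to be an orthonormal basis of $\ker(A)$. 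The matrix $W$ whose $i$-th row is $w_i^*$ is then unitary.

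Finally I would verify $A = UDW$ by computing entries of $U^* A W^*$. For $j \leq r$, $u_i^* A w_j = u_i^* A (A^* u_j / \sigma_j) = \sigma_j^{-1} u_i^* (AA^* u_j) = \sigma_j^{-1} \lambda_j \delta_{ij} = \sigma_j \delta_{ij}$. For $j > r$, $w_j \in \ker(A)$ gives $A w_j = 0$, so the $(i,j)$ entry vanishes, matching $D_{ij} = 0$. Thus $U^* A W^* = D$, which rearranges to $A = UDW$.

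The main conceptual input is the spectral theorem for Hermitian matrices; everything else is careful bookkeeping about kernels and images. The only points requiring attention are the zero-singular-value columns, where the formula $w_i = A^* u_i / \sigma_i$ is undefined and must be replaced by a hand-picked orthonormal basis of $\ker(A)$, and the case of repeated nonzero $\sigma_i$, where the eigenbasis $\{u_i\}$ inside each $\lambda_i$-eigenspace of $AA^*$ is not canonical; the construction is nevertheless insensitive to the choice within each eigenspace, so no obstruction arises.
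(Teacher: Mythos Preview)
Your proof is correct and follows the standard route to the singular value decomposition via the spectral theorem for the Hermitian matrix $AA^*$. Note, however, that the paper does not supply its own proof of Theorem~\ref{th-SVD}: it is stated as well-known background material (with an implicit reference to \cite{Lax07}) and left unproved, so there is no in-paper argument to compare against. Your argument would serve perfectly well as the omitted proof.
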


We point out that the singular value decomposition holds for non-square matrices as well.

There is an analogue of the singular value decomposition for matrices over the field of fractions of a principal ideal domain (for a proof see for example \cite[Section 12.1, Theorem 5]{DF}). 

\begin{Th}[Smith normal form]  \label{th-Smith-normal-form} 
Let $\mathcal{O}$ be a principal ideal domain with field of fractions $K$. Consider a matrix $A \in \Mat(n, K)$. 
Then $A$ can be written as: $$A = P \tau Q,$$ where $P$, $Q \in \GL(n, \mathcal{O})$ and $\tau$ is a diagonal matrix $\diag(a_1, \ldots, a_n)$ such that $a_1 | a_2 | \cdots | a_n$ (for $a, b \in K$ we say $a | b$ if $b/a \in \mathcal{O}$). The diagonal entries of $\tau$ are called the {\it invariant factors of $A$}.

\end{Th}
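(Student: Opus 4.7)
The plan is to reduce the statement over $K$ to the corresponding statement over $\mathcal{O}$, and then argue by induction on $n$ using only elementary row and column operations that lie in $\GL(n, \mathcal{O})$. First I would clear denominators: writing $A_{ij} = p_{ij}/q_{ij}$ with $p_{ij}, q_{ij} \in \mathcal{O}$ and setting $c = \prod_{i,j} q_{ij} \in \mathcal{O} \setminus \{0\}$, the matrix $cA$ lies in $\Mat(n, \mathcal{O})$. If one can produce $P, Q \in \GL(n, \mathcal{O})$ with $P(cA)Q = \textup{diag}(b_1, \ldots, b_n)$ and $b_1 \mid b_2 \mid \cdots \mid b_n$ in $\mathcal{O}$, then $PAQ = \textup{diag}(b_1/c, \ldots, b_n/c)$ and the divisibility chain persists in the sense defined in the theorem, since $b_{i+1}/b_i \in \mathcal{O}$ is unchanged by scaling. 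So it suffices to treat $A \in \Mat(n, \mathcal{O})$.

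For the core step, assuming $A \in \Mat(n, \mathcal{O})$ is nonzero, I would choose, among all matrices in the equivalence orbit $\{PAQ : P, Q \in \GL(n, \mathcal{O})\}$, one whose $(1,1)$-entry $\alpha$ is nonzero and generates a principal ideal $(\alpha)$ that is maximal (equivalently, $\alpha$ has minimal length as a product of irreducibles). Such a minimizer exists because $\mathcal{O}$, being a PID, is Noetherian. The key claim is then that this $\alpha$ divides every entry of the chosen matrix. If some row-$1$ entry $\beta$ is not divisible by $\alpha$, write $d = \gcd(\alpha, \beta) = u\alpha + v\beta$ with $u, v \in \mathcal{O}$ and consider
\[
M = \begin{pmatrix} u & -\beta/d \\ v & \alpha/d \end{pmatrix},
\]
which has determinant $1$. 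Extending $M$ to $\GL(n, \mathcal{O})$ acting on the first and $j$th columns replaces $\alpha$ by $d$, strictly enlarging $(\alpha)$ and contradicting minimality. The same trick with rows handles the first column, and one further reduction (adding a row containing an offending entry $A_{ij}$ to the first row) reduces any off-diagonal obstruction to the first-row case.

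Once $\alpha$ divides every entry, subtracting appropriate $\mathcal{O}$-multiples of the first row (respectively column) from the remaining rows (respectively columns) clears everything off the first row and column, producing a block form $\textup{diag}(\alpha, A')$ with $A' \in \Mat(n-1, \mathcal{O})$ and every entry of $A'$ still a multiple of $\alpha$. Induction on $n$ then yields $P' A' Q' = \textup{diag}(a_2, \ldots, a_n)$ with $a_2 \mid \cdots \mid a_n$, and the divisibility $\alpha \mid a_2$ is automatic because $\alpha$ divides every entry of $A'$ and hence of $P' A' Q'$. Padding $P', Q'$ with a $1$ in the $(1,1)$-slot assembles the claimed decomposition. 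The main subtlety in this plan is the termination step used to select the minimizing $\alpha$: this is precisely where the PID hypothesis enters, through the ascending chain condition on principal ideals, and it is the one point that deserves careful wording in a clean write-up — for the Euclidean case $\mathcal{O} = \C[[t]]$ of actual interest in this paper, one could instead invoke the $t$-adic valuation directly and obtain an explicit, constructive version of the algorithm.
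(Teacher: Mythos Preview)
The paper does not actually prove this theorem: Smith normal form is stated in Section~4 as a well-known background result from linear algebra, with no argument supplied. So there is no ``paper's proof'' to compare against.

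That said, your outline is the standard textbook proof and is correct in all essentials. The reduction from $K$ to $\mathcal{O}$ by clearing denominators is clean, and the divisibility chain is indeed preserved since the ratios $a_{i+1}/a_i = b_{i+1}/b_i$ are unaffected by the global scalar $c$. The core minimization argument is right, and your observation that the PID hypothesis enters exactly through the ascending chain condition (to guarantee a minimizer exists, or equivalently that the iterated $\gcd$ process terminates) is the correct diagnosis.

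One small point worth tightening in a write-up: in the off-diagonal step, when you add row $i$ to row $1$ to expose an offending entry $A_{ij}$ in the first row, the $(1,1)$-entry becomes $\alpha + A_{i1}$ rather than $\alpha$, and the first-row $\gcd$ argument then compares against this new pivot. The cleanest fix is to first clear the first column (which you can do once you know $\alpha$ divides it), so that $A_{i1}=0$ and the $(1,1)$-entry remains exactly $\alpha$ after the row addition; then the contradiction with minimality of $(\alpha)$ goes through verbatim. This is a cosmetic reordering, not a real gap. Your closing remark that for $\mathcal{O}=\C[[t]]$ one can run the whole thing constructively using the $t$-adic valuation is exactly how the paper uses the result downstream.
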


Let $A(t)$ be an $n \times n$ matrix whose entires $A_{ij}(t)$ are Laurent series in $t$ over $\C$. Since the ring of formal power series $\C[[t]]$ is a PID, the Smith normal form theorem applies to $A(t)$ and states that it can be written as:
$$P(t) \tau(t) Q(t),$$
where $P(t), Q(t)$ are $n \times n$ matrices with power series entries such that their determinants are invertible power series, and $\tau(t)$ is a diagonal matrix. Every element $f \in \K = \C((t))$ can be written as $f = t^v h$ where $v \in \Z$ and $h \in \mathcal{O}=\C[[t]]$ is a power series with non-zero constant term and hence a unit in $\mathcal{O}$. Thus, after multiplying with a diagonal matrix in $\GL(n, \mathcal{O})$ we can assume that
$\tau(t) = \diag(t^{v_n}, \ldots, t^{v_1})$ with integers $v_n \leq  \cdots \leq v_1$. By slight abuse of terminology, we refer to $v_1, \ldots, v_n$ as the {\it invariant factors of $A(t)$}. 

Finally we recall the Hilbert-Courant min-max principle about the eigenvalues of a Hermitian matrix (see \cite[p. 116, Theorem 10]{Lax}). 
 
\begin{Th}[Min-max principle]    \label{th-Hilbert-Courant-minmax}
Let $H$ be an $n \times n$ Hermitian matrix with eigenvalues $\lambda_1 \leq \cdots \leq \lambda_n$. Then:
\begin{equation}  \label{equ-min-max}
\lambda_k = \min \Bigl \{ \max_{x \in W \setminus \{0\}} \frac{(Hx, x)}{(x, x)} \mid W \subset \C^n \textup{ linear subspace with }\dim(W) = k \Bigr \},
\end{equation}
and
\begin{equation}    \label{equ-max-min}
\lambda_k = \max \Bigl \{ \min_{x \in U \setminus \{0\} } \frac{(Hx, x)}{(x, x)} \mid U \subset \C^n \textup{ linear subspace with } \dim(U) = n-k+1 \Bigr \}.
\end{equation}
In the above, $(x, y) = \sum_{i=1}^n x_i \bar{y}_i$ denotes the standard Hermitian product in $\C^n$.
\end{Th}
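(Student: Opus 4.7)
The plan is to use the Smith normal form $A(t) = P(t)\tau(t)Q(t)$, with $\tau(t) = \textup{diag}(t^{v_1}, \ldots, t^{v_n})$ and $P(t), Q(t) \in \GL(n, \mathcal{O})$, to reduce the claim to understanding how left and right multiplication by matrices that remain invertible as $t \to 0$ perturb the singular values. Two elementary observations set the stage. First, for $0 < t < 1$, the ordering $v_1 \geq \cdots \geq v_n$ forces $t^{v_1} \leq \cdots \leq t^{v_n}$, so the $k$-th smallest singular value of the diagonal matrix $\tau(t)$ is exactly $d_k(\tau(t)) = t^{v_k}$. Second, since $\det P(t)$ and $\det Q(t)$ are units in $\mathcal{O}$, their constant terms $\det P(0), \det Q(0)$ are nonzero, so $P(t), Q(t)$ specialize to invertible matrices in $\GL(n, \C)$ at $t = 0$; consequently the extremal singular values $\sigma_{\min}(P(t)), \sigma_{\max}(P(t)), \sigma_{\min}(Q(t)), \sigma_{\max}(Q(t))$ are continuous functions of $t$ that approach positive nonzero limits as $t \to 0^+$.

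The key lemma is a multiplicative sandwich for singular values: for any invertible matrix $M$ and any matrix $N$,
\[
\sigma_{\min}(M)\, d_k(N) \;\leq\; d_k(MN) \;\leq\; \sigma_{\max}(M)\, d_k(N),
\]
and analogously for $d_k(NM)$. I would prove this directly from the min-max principle (Theorem \ref{th-Hilbert-Courant-minmax}) applied to the positive semidefinite matrix $(MN)^*(MN)$: starting from the identity $d_k(MN)^2 = \min_{\dim W = k} \max_{x \in W,\, \|x\|=1} \|MNx\|^2$ and inserting the pointwise bound $\sigma_{\min}(M)\|Nx\| \leq \|MNx\| \leq \sigma_{\max}(M)\|Nx\|$ into both the inner max and the outer min yields the claim. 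The right-multiplication version then follows from $d_k(NM) = d_k((NM)^*) = d_k(M^*N^*)$ together with $\sigma_{\min/\max}(M^*) = \sigma_{\min/\max}(M)$ and $d_k(N^*) = d_k(N)$.

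Applying this sandwich twice to $A(t) = P(t)\tau(t)Q(t)$ produces
\[
\sigma_{\min}(P(t))\,\sigma_{\min}(Q(t))\cdot t^{v_k} \;\leq\; d_k(A(t)) \;\leq\; \sigma_{\max}(P(t))\,\sigma_{\max}(Q(t))\cdot t^{v_k}.
\]
Taking $\log_t$ of all three expressions and using the elementary fact that $\log_t(c) \to 0$ as $t \to 0^+$ for any fixed positive constant $c$, the two outer bounds both tend to $v_k$, forcing $\log_t(d_k(A(t))) \to v_k$. The principal technical obstacle is the multiplicative sandwich lemma for singular values; once it is in place, the remainder of the argument is continuity of the singular values of $P(t), Q(t)$ at $t = 0$, which is immediate from the fact that $P, Q \in \GL(n, \C[[t]])$ specialize to invertible complex matrices.
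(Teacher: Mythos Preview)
Your proposal does not address the stated theorem. The statement you were asked to prove is the Hilbert--Courant min-max principle (Theorem~\ref{th-Hilbert-Courant-minmax}), a classical characterization of eigenvalues of a Hermitian matrix. The paper does not prove this result at all; it is quoted as standard background with a reference to \cite{Lax07}. What you have written instead is a proof strategy for Theorem~\ref{th-main}, the main result of the paper about limits of singular values. So as a proof of the min-max principle, the proposal is simply off-target: nothing in it establishes either \eqref{equ-min-max} or \eqref{equ-max-min}.

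That said, if one reads your proposal as an argument for Theorem~\ref{th-main}, it is correct and close in spirit to the paper's own proof, though packaged differently. The paper applies the min-max and max-min formulas directly to $A_t^*A_t$, chooses the test subspaces $Q_t^{-1}(E_k)$ and $Q_t^{-1}(E_k')$ explicitly, and bounds the Rayleigh quotients of $P_t$ and $Q_t$ via Lemma~\ref{lem-m-M}. You instead isolate a clean general lemma --- the multiplicative sandwich $\sigma_{\min}(M)\,d_k(N) \le d_k(MN) \le \sigma_{\max}(M)\,d_k(N)$ --- and then appeal to continuity of $\sigma_{\min},\sigma_{\max}$ of $P(t),Q(t)$ at $t=0$. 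Your route is slightly more modular and reusable; the paper's is more hands-on but arrives at exactly the same inequalities $m\,m'\,t^{2v_k} \le d_k(t)^2 \le M\,M'\,t^{2v_k}$ before taking $\log_t$. Both arguments rest on the same two ingredients: the min-max principle and the fact that $P(t),Q(t)$ have singular values bounded away from $0$ and $\infty$ near $t=0$.
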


\section{Proof of main result}    \label{sec-main}
We now give a proof of Theorem \ref{th-main}. We will use the following lemma.
\begin{Lem} \label{lem-m-M}
Let $P(t) \in \GL(n, \mathcal{O})$ where $\mathcal{O} = \C[[t]]$. Suppose $P(t)$ is convergent for sufficiently small values of $t$, that is, there exists $\epsilon > 0$ such that all the power series $P_{ij}(t)$ are convergent for any $0 \leq |t| < \epsilon$. Then there are constants $0 < m < M$ such that for sufficiently small $t$ and any $x \in \C^n$ with $||x||=1$, we have $m < ||P(t)x|| < M$. Here $||x|| = (\sum_{i=1}^n x_i \bar{x}_i)^{1/2}$.
\end{Lem}
\begin{proof}
Recall that the operator norm of a matrix $P \in \Mat(n, \C)$ is defined by:
$$||P|| = \max\{ || P x || \mid x \in \C^n,~ ||x|| =1 \}.$$
One knows that the operator norm depends continuously on the entries of $P$. Under the assumptions in the lemma, the entries of $P(t)$ depend continuously on $t$ and hence as $t$ varies in a closed bounded interval containing $0$, the entries of $P(t)$ and hence $||P(t)||$ remain bounded above. To show the lower bound, suppose by contradiction that there is a sequence $\{t_i\}$, $\lim_{i \to \infty} t_i = 0$, and a sequence $\{x_i\}$, $||x_i|| = 1$, such that $\lim_{i \to \infty} P(t_i)x_i = 0$. After passing to a subsequence, we can assume $\{x_i\}$ is convergent to some unit vector $x$. It follows that $P(0)x = 0$. But this contradicts the assumption that $P(t) \in \GL(n, \mathcal{O})$ and hence $\det(P(0)) \neq 0$.
\end{proof}

\begin{proof}[Proof of Theorem \ref{th-main}]
Throughout the proof we write $A_t$ in place of $A(t)$. We first show that for any $1 \leq k \leq n$ we have: 
$$\lim_{t \to 0^+} \log_t d_k(t) \leq v_k.$$

Let $\mathcal{O}' \subset \mathcal{O}=\C[[t]]$ denote the subring of power series with nonzero radius of convergence. It is straightforward to see that $\mathcal{O}'$ is a discrete valuation ring as well (see, for instance, \cite[Section 8.1]{Reid}). The field of fractions $\mathcal{K}'$ of $\mathcal{O}'$ is the subfield of formal Laurent series $\C((t))$ that are convergent in a punctured neighborhood of the origin. Thus, $A_t \in \GL(n, \mathcal{K}')$. By the Smith normal form theorem, applied to the discrete valuation ring $\mathcal{O}'$, we can write
$$A_t = P_t \tau_t Q_t,$$ where $\tau_t$ is a diagonal matrix $\diag(t^{v_1}, \ldots, t^{v_n})$ and $P_t$, $Q_t \in \GL(n, \mathcal{O}')$. Thus, the entries of the matrices $P_t$ and $Q_t$ are convergent in a neighborhood of the origin.

The $k$-th singular value $d_k(t)$ of $A_t$ is the square root of the $k$-th eigenvalue of $A^*_t A_t$. We note that for any $x \in \C^n$, $(A^*_t A_t x, x) = (A_tx, A_tx)$. By \eqref{equ-min-max} in Theorem \ref{th-Hilbert-Courant-minmax}, we then have:
$$d_k(t)^2 = \min \{ \max \{ \frac{(A_tx, A_tx)}{(x, x)} \mid 0 \neq x \in W\} \mid \dim(W) = k \}.$$ 

Fix a subspace $W \subset \C^n$ with $\dim(W) = k$. Then:
$$d_k(t)^2 \leq \max \{ \frac{(A_tx, A_tx)}{(x, x)} \mid 0 \neq x \in W\}.$$ 

Let $y_t = Q_t x$ and $z_t = \tau_t Q_t x$. Then: $$\frac{(A_tx, A_tx)}{(x, x)} = \frac{(P_t z_t, P_t z_t)}{(z_t, z_t)} \cdot \frac{(\tau_t y_t, \tau_t y_t)}{(y_t, y_t)} \cdot \frac{(Q_t x, Q_tx)}{(x, x)}.$$ Since we work with non-negative numbers, the maximum of a product is less than or equal to the product of maximums. Hence we have:
\begin{multline}   \label{equ-d_k-max}
d_k(t)^2 \leq  \max \{ \frac{(P_tz, P_tz)}{(z, z)} \mid 0 \neq z \in \tau_tQ_t(W)\} ~\cdot \\
\cdot \max \{ \frac{(\tau_t y, \tau_t y)}{(y, y)} \mid 0 \neq y \in Q_t(W)\} \cdot
\max \{ \frac{(Q_t x, Q_tx)}{(x, x)} \mid 0 \neq x \in W\}.
\end{multline}
Now take $W$ to be the subspace $Q_t^{-1}(E_k)$ where $E_k = \operatorname{span}\{ e_1, \ldots, e_k\}$, the span of the first $k$ standard basis elements. With this choice and for $0<t<1$ we have:
\begin{equation}  \label{equ-tau_t-max}
t^{2v_k} = \max \{ \frac{(\tau_t y, \tau_t y)}{(y, y)} \mid 0 \neq y \in Q_t(W)=E_k\}.
\end{equation}
By Lemma \ref{lem-m-M}, we can find $M, M' > 0$ such that for all $0 \neq x, z \in \C^n$ we have:
$$\frac{(P_tz, P_tz)}{(z, z)} < M,$$
$$\frac{(Q_tx, Q_tx)}{(x, x)} < M'.$$
In view of \eqref{equ-d_k-max} and \eqref{equ-tau_t-max}, this implies that $d_k(t)^2 \leq M \cdot t^{2v_k} \cdot M'$. After taking $\log_t$, with $0 < t < 1$, we obtain:
\begin{equation}   \label{equ-log-d_k-geq-v_k}
2\log_t(d_k(t) \geq \log_t(M) + 2v_k + \log_t(M').
\end{equation}
To prove the reverse inequality, we apply \eqref{equ-max-min} in Theorem \ref{th-Hilbert-Courant-minmax} to the Hermitian matrix $A^*_t A_t$ to get:
\begin{equation} \label{equ-d_k-min}
d_k(t)^2 = \max \{ \min \{ \frac{(A_tx, A_tx)}{(x, x)} \mid 0 \neq x \in U\} \mid \dim(U) = n-k+1 \}.
\end{equation}
If we fix a subspace $U$ with $\dim(U) = n-k+1$, from \eqref{equ-d_k-min} we have: $$d_k(t)^2 \geq \min \{ \frac{(A_tx, A_tx)}{(x, x)} \mid 0 \neq x \in U\}.$$ 
Again, since we work with non-negative numbers, the minimum of a product is greater than or equal to the product of minimums. Thus, with the notation as before, we have:
\begin{multline}   \label{equ-d_k-min2}
d_k(t)^2 \geq  \min \{ \frac{(P_tz, P_tz)}{(z, z)} \mid 0 \neq z \in \tau_tQ_t(U)\}  \cdot \\
\cdot \min \{ \frac{(\tau_t y, \tau_t y)}{(y, y)} \mid 0 \neq y \in Q_t(U)\} \cdot
\min \{ \frac{(Q_t x, Q_tx)}{(x, x)} \mid 0 \neq x \in U\}.
\end{multline}
Now take $U$ to be the subspace $Q_t^{-1}(E'_k)$ where $E'_k = \operatorname{span}\{ e_k, \ldots, e_n\}$, the space of last $n-k+1$ standard basis elements. With this choice we have:
\begin{equation}  \label{equ-tau_t-min}
t^{2v_k} = \min \{ \frac{(\tau_t y, \tau_t y)}{(y, y)} \mid 0 \neq y \in Q_t(U)=E_k'\}.
\end{equation}
By Lemma \ref{lem-m-M} we can find $m, m' > 0$ such that for all $0 \neq x, z \in \C^n$ we have $(P_tz, P_tz)/(z, z) > m,$ and $(Q_tx, Q_tx)/(x, x) > m'.$
In view of \eqref{equ-d_k-min2} and \eqref{equ-tau_t-min}, this implies that $d_k(t)^2 \geq m \cdot t^{2v_k} \cdot m'$ and after taking $\log_t$, with $0 < t < 1$, we obtain:
\begin{equation}   \label{equ-log-d_k-leq-v_k}
2 \log_t(d_k(t)) \leq \log_t(m) + 2v_k + \log_t(m').
\end{equation}
Taking limit $t \to 0$ in the inequalities \eqref{equ-log-d_k-geq-v_k} and \eqref{equ-log-d_k-leq-v_k} we obtain $$\lim_{t \to 0}  \log_t(d_k(t)) = v_k,$$ as required. 
\end{proof}


\begin{thebibliography}{99}
\bibitem[Abhyankar90]{Abhyankar}
Abhyankar, S. S. \textit{Algebraic geometry for scientists and engineers}. Mathematical Surveys and Monographs, 35. American Mathematical Society, Providence, RI, 1990.
\bibitem[Bergman71]{Bergman} Bergman, G. M.  \textit{The logarithmic limit-set of an algebraic variety}. Trans. Amer. Math. Soc., 157:459--469, 1971.
\bibitem[DeConcini-Procesi83]{DP2} De Concini C., Procesi C., \textit{Complete symmetric varieties. II}. Intersection theory, in Algebraic Groups and Related Topics (Kyoto/Nagoya, 1983), Adv. Stud. Pure Math., Vol. 6, North-Holland, Amsterdam, 1985, 481--513.
\bibitem[Dummit-Foote04]{DF}
Dummit, D. S.; Foote, R. M. \textit{Abstract algebra}. Third edition. John Wiley \& Sons, Inc., Hoboken, NJ, 2004.
\bibitem[GKZ08]{GKZ} Gelfand, I. M.; Kapranov, M. M.; Zelevinsky, A. V. {\it Discriminants, resultants and multidimensional determinants}. Reprint of the 1994 edition. Modern Birkh\"auser Classics. Birkh\"auser Boston, Inc., Boston, MA, 2008.
\bibitem[Jonsson16]{Jonsson} Jonsson, M. {\it Degenerations of amoebae and Berkovich spaces}. Math. Ann. 364 (2016), no. 1-2, 293--311.
\bibitem[Lax07]{Lax} Lax, P. D. {\it Linear algebra and its applications}. Enlarged second edition. Pure and Applied Mathematics (Hoboken). Wiley-Interscience John Wiley \& Sons, Hoboken, NJ, 2007. 
\bibitem[Kaveh-Manon19]{KM} Kaveh, K.; Manon, C. {\it Gr\"obner theory and tropical geometry on spherical varieties}. Transform. Groups 24 (2019), no. 4, 1095--1145.
\bibitem[Maclagan-Sturmfels15]{Maclagan-Sturmfels} Maclagan, D.; Sturmfels, B. {\it Introduction to tropical geometry}. Graduate Studies in Mathematics, AMS (2015).
\bibitem[Mikhalkin04]{Mikhalkin} Mikhalkin, G. {\it Amoebas of algebraic varieties and tropical geometry}. Different faces of geometry, 257--300, Int. Math. Ser. (N. Y.), 3, Kluwer/Plenum, New York, 2004. 
\bibitem[Nash20]{Nash} Nash, E. \emph{Global spherical tropicalization via toric embeddings}. Math. Z. 294 (2020), no. 1-2, 615--637.
\bibitem[Reid95]{Reid} Reid, M. \textit{Undergraduate commutative algebra}. Vol. 29 of London Mathematical Society Student Texts, Cambridge University Press, Cambridge, 1995.
\bibitem[Tevelev07]{Tevelev} Tevelev, J. \textit{Compactifications of subvarieties of tori}, Amer. J. Math. 129 (2007), no. 4, 1087--1104.
\bibitem[Tevelev-Vogiannou21]{Vogiannou} Tevelev, J.; Vogiannou, T. \textit{Spherical tropicalization}. Transform. Groups 26 (2021), no. 2, 691--718.
\end{thebibliography}
\end{document}